\documentclass[12pt]{amsproc}

\usepackage{amsthm,amssymb,amsmath,anysize,booktabs,enumerate,hyperref,dsfont}

\hypersetup{citecolor=red, linkcolor=blue, colorlinks=true}

\newtheorem{theorem}{Theorem}[section]
\newtheorem{lemma}[theorem]{Lemma}
\newtheorem{corollary}[theorem]{Corollary}
\newtheorem{conjecture}[theorem]{Conjecture}

\newcommand{\DW}{\mathsf{DW}}
\renewcommand{\DH}{\mathsf{DH}}
\newcommand{\DQ}{\mathsf{DQ}}
\renewcommand{\le}{\leqslant}
\renewcommand{\ge}{\geqslant}
\DeclareMathOperator{\dist}{d}
\newcommand{\allones}{\mathds{1}}

\title{On $m$-ovoids of regular near polygons}

\author[Bamberg]{John Bamberg}
\email{john.bamberg@uwa.edu.au}

\author[Lansdown]{Jesse Lansdown}
\email{jesse.lansdown@research.uwa.edu.au}

\author[Lee]{Melissa Lee}
\email{m.lee16@imperial.ac.uk}

\address[Bamberg, Lansdown]{Centre for the Mathematics of Symmetry of Computation,
School of Mathematics and Statistics, University of Western Australia, Perth WA, Australia.}
\address[Lee]{Department of Mathematics, Imperial College, London SW7 2AZ, UK}

\thanks{The first author acknowledges the support of the Australian Research Council (ARC) 
Future Fellowship FT120100036. 
The second author acknowledges the support of an Australian Postgraduate Award and a UWA Top-Up Scholarship. 
The third author acknowledges the support of a Hackett Postgraduate Research Scholarship.}

\keywords{regular near polygon, dual polar space, hemisystem}
\subjclass[2010]{05B25, 51E12, 51E20}

\begin{document}

\maketitle

\begin{abstract}
We generalise the work of Segre (1965), Cameron -- Goethals -- Seidel (1978), and Vanhove (2011)
by showing that nontrivial $m$-ovoids of the dual polar spaces 
$\DQ(2d, q)$, $\DW(2d-1,q)$ and $\DH(2d-1,q^2)$ ($d\ge 3$)
are hemisystems. We also provide a more general result that holds for regular near polygons.
\end{abstract}

\section{Introduction}

Near polygons are a large class of point-line incidence geometries that contain the \emph{generalised $2d$-gons}
introduced by J. Tits \cite{Tits:1959aa}, and the dual polar spaces of P. J. Cameron \cite{Cameron:1982aa}.
A \emph{near polygon}, as defined by E. Shult and A. Yanushka \cite{Shult:1980aa}, is a point-line geometry such that for
every point $P$ and line $\ell$, there exists a unique point on $\ell$ nearest to $P$. 
If $d$ is the diameter of the collinearity graph of the near polygon, then we call the near polygon a \emph{near $2d$-gon}. 
A near $2d$-gon is said to be \emph{regular} if its collinearity graph is distance regular. Generalised $2d$-gons are examples 
of near polygons, and the regular near $4$-gons are precisely the finite generalised quadrangles (with an order). However, 
there exist regular near polygons that are not generalised $2d$-gons; for example, every finite dual polar space (of rank at least $3$) 
is an example of a regular near polygon.

An \emph{$m$-ovoid} of a near $2d$-gon is a set of points $\mathcal{O}$ such that every line is incident with exactly $m$ points of $\mathcal{O}$.
The \emph{trivial} $m$-ovoids are the empty set ($m=0$) and the full set of points ($m=s+1$; the number of points on a line).
 For dual polar spaces (that are not generalised quadrangles), the existence of $1$-ovoids is mostly resolved,
however, in rank $3$, it is still not known whether $\mathsf{DQ}^-(7,q)$ or $\mathsf{DH}(6,q^2)$
can contain $1$-ovoids. It follows from \cite[3.4.1]{Payne:2009aa} that there are no $1$-ovoids of $\DW(5,q)$ for $q$ even, and the $q$ odd case was settled by
Thomas \cite[Theorem 3.2]{Thomas:1996aa} (see \cite{CoopersteinPasini}  and \cite[Appendix]{DeBruynPralle} for alternative proofs). De Bruyn and Vanhove reproved this result \cite[Corollary 3.14]{DeBruynVanhoveInequalities} and extended it to other regular near hexagons by showing that a finite generalised hexagon of order $(s, s^3)$ with $s \ge 2$ has no $1$-ovoids \cite[Corollary 3.19]{DeBruynVanhoveInequalities}.

Another interesting case arises in the study of $m$-ovoids when $m$ is exactly half of the number of points on a line. Such an $m$-ovoid is called a \emph{hemisystem}. In 1965, Segre \cite{Segre1965} showed that the only nontrivial $m$-ovoids of $\DH(3, q^2)$, for $q$ odd, are hemisystems. Cameron, Goethals and Seidel \cite{CameronGoethalsSeidel} extended Segre's result to all generalised quadrangles of order $(q, q^2)$, $q$ odd. This was then extended further to regular near $2d$-gons of order $(s,t)$ by Vanhove \cite{Vanhove2011},
which also provided a generalisation of the so-called \emph{Higman bound}: if $s>1$ then the intersection number $c_i$ for all 
$i \in \{1, \ldots, d \}$ obeys the following inequality,
\[
c_i \le \frac{s^{2i}-1}{s^2-1}.
\]
Furthermore, if the bound is sharp for some $c_i$ with $i \in \{2, \ldots, d\}$ then any nontrivial $m$-ovoid is a hemisystem 
\cite[Theorem 3]{Vanhove2011}.

Vanhove showed that for $q$ odd if $\DH(2d-1, q^2)$ has a hemisystem then it induces a distance regular graph with classical parameters \cite[Theorem 4]{Vanhove2011}. Hence the question of the existence of hemisystems in $\DH(2d-1, q^2)$ is of great interest. Now, $\DW(2d -1, q)$ can be embedded in $\DH(2d-1,q)$, and lines in both geometries contain the same number of points. This implies that the intersection of an $m$-ovoid of $\DH(2d-1,q)$ with the points of $\DW(2d-1, q)$ is an $m$-ovoid of $\DW(2d-1,q)$. See also \cite{De-Bruyn:2008aa}.
Therefore, the existence of a hemisystem in $\DH(2d-1, q^2)$ implies the existence of a hemisystem in $\DW(2d-1, q)$, and the existence question can be reframed for $\DW(2d-1, q)$.
In this paper, we extend the work of Segre, Cameron -- Goethals -- Seidel, and Vanhove by showing that the only nontrivial $m$-ovoids of certain dual polar spaces are hemisystems.

\begin{theorem}\label{DQDWDH}
The only nontrivial $m$-ovoids that exist in $\DQ(2d, q)$, $\DW(2d-1,q)$ and $\DH(2d-1,q^2)$, for $d \ge 3$, are hemisystems
(i.e., $m = (q+1)/2$).
\end{theorem}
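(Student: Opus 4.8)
The plan is to derive the theorem from the standard eigenvector description of $m$-ovoids together with the vanishing of a single Krein parameter of the collinearity graph, and then to verify that vanishing for the three families. Let $\mathcal{S}$ be a regular near $2d$-gon of order $(s,t)$, let $\Gamma$ be its collinearity graph --- distance-regular, with smallest eigenvalue $\theta_d=-(t+1)$ and eigenspace $E_d$ --- and let $\mathcal{O}$ be an $m$-ovoid with characteristic vector $\chi$. Counting the points of $\mathcal{O}$ collinear with a fixed point (there are $m(t+1)$ of them if the point lies outside $\mathcal{O}$, and $(m-1)(t+1)$ if it lies inside) gives $A_1\chi=m(t+1)\allones-(t+1)\chi$, so that $v:=\chi-\tfrac{m}{s+1}\allones$ lies in $E_d$, and $v\ne 0$ precisely when $\mathcal{O}$ is nontrivial. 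Since $v$ takes only the values $1-\tfrac{m}{s+1}$ and $-\tfrac{m}{s+1}$, a one-line computation yields
\[
v\circ v=\tfrac{m(s+1-m)}{(s+1)^{2}}\,\allones+\tfrac{s+1-2m}{s+1}\,v,\qquad\text{so}\qquad \sum_{P}v_P^{3}=\langle v\circ v,\,v\rangle=\tfrac{s+1-2m}{s+1}\,\lVert v\rVert^{2}.
\]

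Now for the criterion. In a commutative association scheme, $q^{k}_{ij}=0$ forces the trilinear form $(x,y,z)\mapsto\sum_P x_Py_Pz_P$ to vanish on $E_i\times E_j\times E_k$; with $i=j=k=d$ this gives $\sum_P v_P^{3}=0$, and comparison with the identity above forces $s+1-2m=0$ as soon as $v\ne 0$. This is the general statement I would isolate: \emph{if the collinearity graph of a regular near $2d$-gon has $q^{d}_{dd}=0$, then every nontrivial $m$-ovoid is a hemisystem.} For $\DH(2d-1,q^{2})$ there is nothing further to do, since its collinearity graph has $c_2=\tfrac{q^{4}-1}{q^{2}-1}=\tfrac{s^{4}-1}{s^{2}-1}$, so the Higman bound of \cite{Vanhove2011} is attained and \cite[Theorem~3]{Vanhove2011} applies directly. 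It therefore remains to show $q^{d}_{dd}=0$ for the dual polar graphs of $Q(2d,q)$ and $W(2d-1,q)$ with $d\ge3$.

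For these two families the Higman bound is \emph{strict} ($c_2=q+1<q^{2}+1$), so \cite[Theorem~3]{Vanhove2011} no longer applies and $q^{d}_{dd}$ must be evaluated by hand. Both graphs have classical parameters $(d,q,0,q)$, hence $c_i=\tfrac{q^{i}-1}{q-1}$, $b_i=q^{\,i+1}\tfrac{q^{d-i}-1}{q-1}$ and eigenvalues $\theta_j=q^{\,j+1}\tfrac{q^{d-j}-1}{q-1}-\tfrac{q^{j}-1}{q-1}$; from this I would read off the multiplicities $m_j$, the valencies $k_l$, and the dual eigenvalues $Q_{dl}$ (using the $Q$-polynomial structure of the dual polar graph), and then evaluate
\[
q^{d}_{dd}=\frac{1}{|V|\,m_d}\sum_{l=0}^{d}k_l\,Q_{dl}^{\,3},
\]
checking that this sum vanishes for every $d\ge3$. (It does not vanish when $d=2$, in line with $\DW(3,q)\cong Q(4,q)$ possessing ovoids --- so the hypothesis $d\ge3$ is genuinely needed.) If one wants to minimise bookkeeping, one can first reduce the rank: a nontrivial $m$-ovoid of any of the three families meets each convex subspace of diameter $3$ --- a ``hex'', isomorphic to $\DQ(6,q)$, $\DW(5,q)$ or $\DH(5,q^{2})$, whose lines are precisely the ambient lines lying inside it --- in a nontrivial $m$-ovoid, so it suffices to verify $q^{3}_{33}=0$ for $\DW(5,q)$ and $\DQ(6,q)$.

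The hard part is exactly this last verification: showing that the relevant Krein parameter of the symplectic and parabolic dual polar graphs already vanishes for $d\ge3$ despite the Higman inequality being strict. Everything before it is formal association-scheme bookkeeping; the obstacle is getting the dual eigenvalues of $\DW(2d-1,q)$ and $\DQ(2d,q)$ into a shape in which the cubic sum visibly telescopes.
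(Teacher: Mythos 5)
Your argument is sound, but it takes a genuinely different route from the paper. The paper proves a general statement (Theorem \ref{ThmMain}) for any regular near $2d$-gon whose parameters satisfy the identity $t_i+1=\frac{(s^i+(-1)^i)(t_{i-1}+1+(-1)^is^{i-2})}{s^{i-2}+(-1)^i}$ for some $i$, by double counting pairs of points of $\mathcal{O}$ at prescribed distances from a fixed $x\notin\mathcal{O}$ and exploiting the design-orthogonality of De Bruyn--Vanhove's vectors $v_{x,y}$ with $\chi_{\mathcal{O}}$; Theorem \ref{DQDWDH} is then deduced from De Bruyn and Vanhove's classification of the near polygons attaining their lower bound at $i=3$. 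You instead run the Cameron--Goethals--Seidel cubic-form argument directly on the three families: the shifted characteristic vector $v$ lies in the $-(t+1)$-eigenspace, $\sum_Pv_P^3=\frac{s+1-2m}{s+1}\lVert v\rVert^2$, and a vanishing Krein parameter kills the left-hand side. All of that is correct, and the one step you flag as outstanding --- $q^d_{dd}=0$ for $\DQ(2d,q)$ and $\DW(2d-1,q)$ when $d\ge3$ --- does go through, more easily than you fear: the standard sequence at $\theta_d=-(t+1)$ is $u_l=(-1/s)^l$ (this is Lemma \ref{Vanhove5} in disguise), so $Q_{ld}=m_d u_l$ and $q^d_{dd}=\frac{m_d^2}{n}\sum_l k_l(-s^{-3})^l$; for classical parameters $(d,b,0,\beta)$ one has $k_l=\binom{d}{l}_b\beta^l b^{\binom{l}{2}}$ (Gaussian binomial), whence the $q$-binomial theorem gives
\[
\sum_{l=0}^{d}k_l\left(-s^{-3}\right)^l=\prod_{j=0}^{d-1}\left(1-\frac{b^j\beta}{s^3}\right),
\]
which for $(b,\beta,s)=(q,q,q)$ vanishes precisely when the factor with $j=2$ is present, i.e.\ $d\ge3$, and for $(b,\beta,s)=(q^2,q,q)$ (the Hermitian case) when $d\ge2$ --- so your criterion in fact covers all three families uniformly, without needing to quote Vanhove's Theorem~3 for $\DH$ separately. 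What your approach buys is a short, purely spectral proof and a clean explanation of why $d\ge3$ is needed; what it gives up is the paper's Theorem \ref{ThmMain}, which applies to an arbitrary regular near polygon attaining the De Bruyn--Vanhove bound without assuming it is isomorphic to one of the known dual polar spaces.
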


Theorem \ref{DQDWDH} follows from a more general, but perhaps more technical result, on $m$-ovoids of regular near polygons.
Our main theorem is:

\begin{theorem} \label{ThmMain}
Let $\mathcal{S}$ be a regular near $2d$-gon of order $(s,t_2, t_3, \ldots, t_{d-1}, t)$ satisfying 
\[
t_i+1= \frac{(s^i+(-1)^i)(t_{i-1}+1 +(-1)^is^{i-2})}{s^{i-2}+(-1)^i}
\]
for some $3 \le i \le d$. If a nontrivial $m$-ovoid of $\mathcal{S}$ exists, then it is a hemisystem.
\end{theorem}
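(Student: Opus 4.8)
The plan is to follow the spectral strategy behind the results of Segre, Cameron--Goethals--Seidel and Vanhove. Write $X$ for the point set, let $\mathcal{O}$ be a nontrivial $m$-ovoid with characteristic vector $\chi\in\{0,1\}^{X}$, and let $\Gamma$ be the collinearity graph: a distance-regular graph of diameter $d$, with distance matrices $A_0=I,A_1,\dots,A_d$, intersection numbers $c_l=t_l+1$, $a_l=(s-1)(t_l+1)$, $b_l=s(t-t_l)$ (using $t_1=0$, $t_d=t$), and valency $k=s(t+1)$. Counting the points of $\mathcal{O}$ collinear with a fixed point — which is $(t+1)m$ or $(t+1)(m-1)$ according as the point lies outside or inside $\mathcal{O}$ — and using $k=s(t+1)$ shows at once that $\bar\chi:=\chi-\frac{m}{s+1}\allones$ satisfies $A_1\bar\chi=-(t+1)\bar\chi$; since $\mathcal{O}$ is nontrivial, $\bar\chi\neq 0$, so $-(t+1)$ is an eigenvalue $\theta$ of $\Gamma$ and $\bar\chi$ lies in the corresponding eigenspace $V_\theta$. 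Hence $A_l\bar\chi=v_l\bar\chi$ for every $l$, and from the standard three-term recurrence for eigenvalues of distance matrices, together with the identities for $c_l,a_l,b_l$, one checks by induction that $v_l=\prod_{r=1}^{l}\frac{t_{r-1}-t}{t_r+1}$ (with the convention $t_0=-1$); equivalently $v_l=(-1)^l k_l/s^l$, where $k_l=|\Gamma_l(P)|$.

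Since $\chi\in V_0\oplus V_\theta$, the partition $\{\mathcal{O},X\setminus\mathcal{O}\}$ is equitable. I would then record the Hadamard identity $\bar\chi\circ\bar\chi=\frac{m(s+1-m)}{(s+1)^2}\allones+\frac{s+1-2m}{s+1}\bar\chi$, which already shows that $\bar\chi\circ\bar\chi$ is a scalar multiple of $\allones$ precisely when $2m=s+1$; the object of the proof is to force this. Next I would study the subgraph $\mathcal{G}$ induced by $\Gamma$ on $\mathcal{O}$: it is $(m-1)(t+1)$-regular, and because in a near polygon two collinear points have all their common neighbours on their common line, each neighbourhood in $\mathcal{G}$ is a disjoint union of $t+1$ cliques of order $m-1$, so adjacent vertices of $\mathcal{G}$ have exactly $m-2$ common neighbours. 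The heart of the argument is to feed the data above — in particular $v_i=(-1)^ik_i/s^i$ and $c_i=t_i+1$, for the index $i$ named in the hypothesis — into the positive semidefinite matrix underlying Vanhove's proof of the generalised Higman bound $c_i\le(s^{2i}-1)/(s^2-1)$. I expect the hypothesis on $t_i$ to be exactly the arithmetic condition under which that matrix attains its extremal rank, so that the distance-$i$ relation restricted to $\mathcal{O}$ (or a closely related derived relation on $\mathcal{O}$) carries a strongly regular, or more generally an association-scheme, structure $\mathcal{H}$ whose parameters are explicit functions of $s$ and $m$ alone. (Incidentally this should contain \cite[Theorem 3]{Vanhove2011}: if the Higman bound is attained at some level $j\ge 2$, then — by downward propagation of sharpness — it is attained at $j-1$ as well, and a direct computation shows the hypothesis then holds at $i=j$.)

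Finally I would close with a feasibility analysis of $\mathcal{H}$. Because $\mathcal{S}$ is nontrivial and $m\in\{1,\dots,s\}$, $\mathcal{H}$ is non-degenerate and is not a conference graph (a short parameter check rules this out), so its non-principal eigenvalues are rational and the multiplicity formulas must return non-negative integers; substituting the explicit parameters and simplifying with the hypothesis identity, I expect this integrality requirement to reduce to a divisibility that is satisfied only when $(s+1-2m)^2=0$, i.e.\ $m=(s+1)/2$, giving Theorem~\ref{ThmMain}. Theorem~\ref{DQDWDH} then follows by verifying the hypothesis with $i=3$ — where it reads $t_3+1=(s^2+s+1)(t_2+1-s)$ — for $\DQ(2d,q)$ and $\DW(2d-1,q)$ (both with $c_l=(q^l-1)/(q-1)$, $s=q$) and for $\DH(2d-1,q^2)$ ($c_l=(q^{2l}-1)/(q^2-1)$, $s=q$). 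The main obstacle is the middle step: identifying the precise positive semidefinite matrix whose extremal rank is governed by the hypothesis, and then checking that the resulting feasibility constraint on $\mathcal{H}$ is genuinely equivalent to $2m=s+1$, rather than merely a bound on $m$ or a condition admitting sporadic exceptional parameter sets.
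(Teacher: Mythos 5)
There is a genuine gap, and you have in fact flagged it yourself: the entire mechanism that converts the hypothesis on $t_i$ into the conclusion $m=(s+1)/2$ is left as an expectation. Your opening computations are sound and agree with the paper's preliminaries ($\bar\chi$ is a $-(t+1)$-eigenvector of $A_1$, $A_l\bar\chi=(-1/s)^l k_l\,\bar\chi$, which is Vanhove's Lemma 5; the verification of the hypothesis at $i=3$ for the three dual polar spaces is also correct). But the two decisive steps --- (a) that sharpness in the De Bruyn--Vanhove inequality endows $\mathcal{O}$ with an association-scheme structure $\mathcal{H}$ with explicit parameters, and (b) that feasibility of $\mathcal{H}$ forces $(s+1-2m)^2=0$ --- are asserted, not proved. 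Step (b) is moreover the wrong kind of tool for the job: integrality of multiplicities yields divisibility constraints that typically exclude parameters only sporadically, whereas the theorem requires an exact identity in $m$ holding uniformly in $s$ and $t$. Even in the classical $d=2$ case, Segre and Cameron--Goethals--Seidel obtain $2m=q+1$ from a quadratic counting identity (a ``variance trick''), not from multiplicity feasibility of a subconstituent; and the analogue of your structure $\mathcal{H}$ (Vanhove's distance-regular graph induced on $\mathcal{O}$) is a \emph{consequence} of $\mathcal{O}$ being a hemisystem, not a stepping stone towards it.

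The paper's actual argument is a direct generalisation of that counting identity and bypasses any induced scheme on $\mathcal{O}$. Fix $x\notin\mathcal{O}$ and double-count pairs $(y,z)\in\mathcal{O}\times\mathcal{O}$ with $\dist(x,y)=i$ and either $\dist(x,z)=1$, $\dist(y,z)=i-1$ or $\dist(x,z)=i-1$, $\dist(y,z)=1$. Counting $y$ first, the inner sum $|\Gamma_1(x)\cap\Gamma_{i-1}(y)\cap\mathcal{O}|+|\Gamma_{i-1}(x)\cap\Gamma_1(y)\cap\mathcal{O}|$ is evaluated exactly because the De Bruyn--Vanhove vector
\[
v_{x,y}=s(c_{i-1}+(-1)^is^{i-2})(\chi_x+\chi_y)+\chi_{\Gamma_1(x)\cap\Gamma_{i-1}(y)}+\chi_{\Gamma_{i-1}(x)\cap\Gamma_1(y)}
\]
is design-orthogonal to $\chi_{\mathcal{O}}$, so $v_{x,y}\cdot\chi_{\mathcal{O}}$ depends only on $|\mathcal{O}|$ --- this is precisely where the equality hypothesis on $t_i$ enters, playing the role you assigned to ``extremal rank of the positive semidefinite matrix.'' Counting $z$ first is an elementary recursion ($sf_i=m-f_{i-1}$ for normalised intersection counts). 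Equating the two counts gives $(s+1-2m)$ times a factor that the hypothesis reduces to a manifestly nonzero multiple of $c_i$, whence $m=(s+1)/2$. If you want to salvage your outline, the concrete task is to replace steps (a) and (b) by this (or an equivalent) identity; as written, the proposal does not establish the theorem.
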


De Bruyn and Vanhove {\cite[Theorem 3.2]{DeBruynVanhoveInequalities}} prove that a regular near $2d$-gon (with $s,d\ge 2$) satisfies
\begin{equation}\label{DBVbounds}
\frac{(s^i -1)(t_{i-1} +1 - s^{i-2})}{s^{i-2}-1} \le t_i+1 \le \frac{(s^i +1)(t_{i-1} +1 + s^{i-2})}{s^{i-2} +1}
\end{equation}
for all $i\in\{3,\ldots, d\}$,
and that a finite regular near $2d$-gon with $s\ge2$ and $d\ge3$ which attains the lower bound for $i=3$ 
is isomorphic to $\DQ(2d, s)$, $\DW(2d-1, s)$ or $\DH(2d-1, s^2)$, where $s$ is a prime power \cite[Theorem 3.5]{DeBruynVanhoveInequalities}.
Note that the hypothesis of Theorem \ref{ThmMain} is valid when the the upper bound is met for $i$ even, or when the lower bound
is met for $i$ odd, in the De Bruyn--Vanhove bounds \eqref{DBVbounds}. Theorem \ref{DQDWDH} follows directly from 
\cite[Theorem 3.5]{DeBruynVanhoveInequalities} and Theorem \ref{ThmMain}.

\section{Background}
This section contains information on some of the key facts about regular near $2d$-gons and $m$-ovoids, which will be useful later in the paper. For greater depth, we refer the reader to Brouwer, Cohen and Neumaier's book \cite{BCN}.

Let $\Gamma$ be a connected, undirected graph without loops. The \emph{distance} between two vertices $x$ and $y$, denoted $\dist(x,y)$, is the shortest path length from $x$ to $y$, and the maximum distance between any two given points is the \emph{diameter} $d$ of $\Gamma$. The set of all vertices at distance $i$ from $x$ is denoted by $\Gamma_i(x)$. A graph $\Gamma$ of diameter $d$ is said to be \emph{distance regular} if there exist numbers $b_i$ for $i \in \{0, \ldots, d-1 \}$ and $c_i$ for $i \in \{1 , \ldots, d \}$ such that $b_i = |\Gamma_{i+1} (x) \cap \Gamma_1(y)|$ and $c_i = |\Gamma_{i-1} (x) \cap \Gamma_1(y)|$ for all $x$ and $y$ at distance $i$ in $\Gamma$. If a graph is distance regular then there also exist constants $a_i = |\Gamma_i(x) \cap \Gamma_1(y)|$ for all $x$ and $y$ at distance $i$ with $i \in \{ 1, \ldots, d-1 \}$. We call $a_i$, $b_i$ and $c_i$ the \emph{intersection numbers} of $\Gamma$. 

Given $x$ and $y$ at distance $l$, there are $p_{i, j}^l$ vertices that are at distance $i$ from $x$ and distance $j$ from $y$. Furthermore, $p_{1,i}^{i-1} = b_{i-1}$, $p_{1, i}^i = a_i$, $p_{1, i}^{i+1} = c_{i+1}$ and $p_{i,j}^l = p_{j, i}^l$. Therefore, combining \cite[Lemma 4.1.7]{BCN} and \cite[\S 4.1 (10)]{BCN}, we may calculate $p_{i+1, j}^l$ recursively using the following formula.
\[
p_{i+1,j}^l = \frac{ p_{i,j}^{l-1} c_l +  p_{i,j}^l a_l +p_{i,j}^{l+1} b_l - p_{i-1,j}^l b_{i-1} -  p_{i,j}^l a_i}{c_{i+1}}.
\]
We also define the $i$-distance \emph{valencies} of the graph, $k_i := p_{i, i}^ 0$ for $i \in \{0, 1, \ldots, d \}$ (and so $k_1=s(t+1)$). 

Given a graph $\Gamma$ of diameter $d$, for any distance $i$, the \emph{adjacency matrix} $A_i$ is the matrix indexed by the vertices of $\Gamma$, with entries
\[ (A_{i})_{xy} = 
	\begin{cases} 
      1 & \text{if }\dist(x, y)=i \\
      0 & otherwise.
      \end{cases} \]
The set of adjacency matrices $\{ A_0, A_1 , \ldots, A_d \}$ forms a basis for the \emph{Bose--Mesner algebra} for $\Gamma$ which also has a unique basis of minimal idempotents $\{E_0, E_1, \ldots , E_d \}$ (see \cite[\S2.6]{BCN}). As a result, the Bose--Mesner algebra can be decomposed into mutually orthogonal subspaces corresponding to the image of each minimal idempotent. By convention, $E_0$ has rank $1$, that is, $E_0=\frac{1}{n} J$ where $J$ is the `all ones' matrix and $n$ is the number of vertices of $\Gamma$. The \emph{dual degree set} of a vector $v$ is the set of indices of the minimal idempotents $E_i$ such that $v E_i \not = 0$ and $i \not = 0$. Two vectors are called \emph{design-orthogonal} when their dual degree sets are disjoint. The following lemma about design-orthogonal vectors will be useful in the proof of the main theorem. It can be found in \cite[Theorem 6.7]{Delsarte:1977aa}, and is given here with a proof for completeness.

\begin{lemma} \label{design-orthog}
If $f$ and $g$ are design-orthogonal vectors, then $f \cdot g = \frac{(f \cdot \allones)(g \cdot \allones)}{n}$, where $\allones$ is the `all-ones' vector.
\end{lemma}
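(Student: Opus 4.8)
The plan is to decompose $f$ and $g$ along the orthogonal projections given by the minimal idempotents $E_0,\ldots,E_d$, and then to observe that disjointness of the dual degree sets annihilates every cross term except the one coming from $E_0$.

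First I would record the properties of the $E_i$ that are needed. Since $\Gamma$ is undirected, each $A_i$ is a real symmetric matrix, so the Bose--Mesner algebra consists of commuting real symmetric matrices; consequently the idempotents $E_i$ are symmetric, they satisfy $E_iE_j=\delta_{ij}E_i$, and $\sum_{i=0}^{d}E_i=I$. Thus each $E_i$ is the matrix of orthogonal projection onto its column space, and the column spaces of distinct $E_i$ are mutually orthogonal.

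Regarding $f$ and $g$ as row vectors, I would then expand
\[
f\cdot g=fIg^{\top}=\sum_{i=0}^{d}fE_ig^{\top}=\sum_{i=0}^{d}(fE_i)(gE_i)^{\top}=\sum_{i=0}^{d}(fE_i)\cdot(gE_i),
\]
where the third equality uses $E_i=E_i^{2}$ together with $E_i=E_i^{\top}$. Because the dual degree sets of $f$ and $g$ are disjoint, for each $i\ge 1$ at least one of $fE_i$ and $gE_i$ is the zero vector, so $(fE_i)\cdot(gE_i)=0$; only the term $i=0$ survives. Finally, from $E_0=\tfrac1n J$ we get $fE_0=\tfrac1n(f\cdot\allones)\allones^{\top}$ and $gE_0=\tfrac1n(g\cdot\allones)\allones^{\top}$, hence
\[
f\cdot g=(fE_0)\cdot(gE_0)=\frac{1}{n^{2}}(f\cdot\allones)(g\cdot\allones)(\allones\cdot\allones)=\frac{(f\cdot\allones)(g\cdot\allones)}{n},
\]
since $\allones\cdot\allones=n$.

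There is no serious obstacle here: the only point requiring care is the justification that the $E_i$ are genuine symmetric orthogonal projections summing to $I$ -- so that the identity $f\cdot g=\sum_i(fE_i)\cdot(gE_i)$ is legitimate -- which is exactly where undirectedness of $\Gamma$ enters; everything else is bookkeeping.
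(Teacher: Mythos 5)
Your proof is correct and follows essentially the same route as the paper: both arguments rest on the orthogonal decomposition of $f$ and $g$ along the minimal idempotents, with disjointness of the dual degree sets killing every contribution except the one from $E_0=\frac1n J$. The paper phrases this by subtracting the projections $\alpha\allones=fE_0$ and $\beta\allones=gE_0$ and noting the remainders lie in trivially intersecting (hence orthogonal) sums of eigenspaces, which is exactly your cross-term cancellation written differently.
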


\begin{proof}
Let $\alpha = \frac{f \cdot \allones}{\allones \cdot \allones}$ and $\beta = \frac{g \cdot \allones}{\allones \cdot \allones}$. So $(f - \alpha \allones) \cdot \allones = 0$ and $(g - \beta \allones) \cdot \allones = 0$. 
Since $f$ and $g$ are design-orthogonal, $(f - \alpha \allones)$ and $(g - \beta \allones)$ belong to a pair of direct sums of eigenspaces that intersect trivially and hence
$( f - \alpha \allones) \cdot (g - \beta \allones) = 0$. Thus 
\begin{align*}
f \cdot g &= \alpha (g \cdot \allones) + \beta (f \cdot \allones) - \alpha \beta \allones \cdot \allones \\
&= \frac{(f \cdot \allones) (g \cdot \allones)}{ \allones \cdot \allones} + \frac{(f \cdot \allones)(g \cdot \allones)}{\allones \cdot \allones} - \frac{(f \cdot \allones) (g \cdot \allones)}{ \allones \cdot \allones} \\
&= \frac{(f \cdot \allones)(g \cdot \allones)}{n}.\qedhere
\end{align*}
\end{proof}

A \emph{near polygon}, or near $2d$-gon ($d \ge 2$) is an incidence geometry such that

\begin{enumerate}
	\item every two points lie on at most one line,
	\item any two points are at most at distance $d$ in the collinearity graph, and
	\item given a line $\ell$ and a point $P$ there is a unique point $Q$ on $\ell$ which is nearest to $P$ with respect to distance in the collinearity graph.
\end{enumerate}

A near polygon that has $t+1$ lines on each point and $s+1$ points on each line is said to have \emph{order} $(s,t)$. If in a near polygon of order $(s,t)$ there also exist constants $t_i$ for $i \in \{0, \ldots, d \}$ such that there are $t_i +1$ lines on $y$ containing a point at distance $i-1$ from $x$ whenever two points $x$ and $y$ are at distance $i$, then such a near polygon is called \emph{regular}, with parameters $(s, t_2, t_3, \ldots, t_{d-1}, t)$. Examples of regular near $2d$-gons include the finite dual polar spaces; the point-line geometries obtained by taking
the maximal totally isotropic subspaces of a finite polar space for the points, and the next-to-maximal subspaces
for the lines. We refer the reader to \cite[\S1.9.5]{NearPolygons} for more on the definition of 
a dual polar space. In this paper, we will only be concerned with 
$\DW(2d-1, s)$, $\DQ(2d, s)$, and $\DH(2d-1, s^2)$.

The finite regular near polygons are exactly the near polygons with distance regular collinearity graphs. Moreover, for all $i \in \{0, \ldots, d\}$
\[
a_i = (s-1)(t_i +1),\quad b_i = s(t - t_i),\quad c_i = t_i +1.
\]
By definition, $t_0=-1$ and $t_1=0$.
In a regular near $2d$-gon with parameters $(s, t_2, t_3, \ldots, t_{d-1}, d)$, we have the following relations.

\begin{lemma}\label{intersectionNumbers} \cite[\S4.1 (7); (9); (1c)]{BCN} 
\begin{align*} 
k_i&= k_{i-1}\frac{b_{i-1}}{c_i} =  s k_{i-1} \frac{t-t_{i-1}}{t_i+1}\quad (1\le i\le d),\\
p_{i,j}^l k_l &= p_{l,j}^i k_i\quad (0\le i,j,\ell, d),\\ 
p_{1,i}^{i+1} &= c_{i+1} \quad (1\le i\le d).  
\end{align*}
\end{lemma}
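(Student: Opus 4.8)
The plan is to establish each of the three identities by elementary double-counting, drawing on the definitions of the intersection numbers together with the explicit values $b_i = s(t - t_i)$ and $c_i = t_i+1$ recorded above for a regular near $2d$-gon; all three are standard facts about distance-regular graphs, which is why they are attributed to \cite{BCN}, and the proof below simply reconstructs the counting arguments for completeness.

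For the first identity I would fix a vertex $x$ and count ordered pairs $(y,z)$ of adjacent vertices with $y \in \Gamma_{i-1}(x)$ and $z \in \Gamma_i(x)$. Summing over $y$ first, each of the $k_{i-1}$ choices of $y$ at distance $i-1$ from $x$ has exactly $b_{i-1} = |\Gamma_i(x) \cap \Gamma_1(y)|$ neighbours at distance $i$, giving $k_{i-1} b_{i-1}$. Summing over $z$ first, each of the $k_i$ choices of $z$ at distance $i$ has exactly $c_i = |\Gamma_{i-1}(x)\cap\Gamma_1(z)|$ neighbours at distance $i-1$, giving $k_i c_i$. Equating the two counts yields $k_i c_i = k_{i-1} b_{i-1}$, that is, $k_i = k_{i-1} b_{i-1}/c_i$; the second equality then follows by substituting $b_{i-1} = s(t - t_{i-1})$ and $c_i = t_i + 1$.

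For the reciprocity relation I would fix an origin $o$ and count ordered pairs $(y,w)$ with $\dist(o,y) = l$, $\dist(o,w) = i$ and $\dist(y,w) = j$. Choosing $y$ first gives $k_l$ choices for $y$ and then $p_{i,j}^l$ choices for $w$ (distance $i$ from $o$, distance $j$ from $y$), for a total of $k_l\, p_{i,j}^l$; choosing $w$ first gives $k_i$ choices for $w$ and then $p_{l,j}^i$ choices for $y$, for a total of $k_i\, p_{l,j}^i$. Equating gives $p_{i,j}^l k_l = p_{l,j}^i k_i$. The third identity is then immediate: by the symmetry $p_{1,i}^{i+1} = p_{i,1}^{i+1}$, the latter counts the neighbours of $y$ lying at distance $i$ from $x$ when $\dist(x,y) = i+1$, which is exactly $c_{i+1}$ by definition, as already noted in the remarks following the definition of the numbers $p_{i,j}^l$.

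These arguments are entirely routine, so I do not anticipate a genuine obstacle. The only point requiring care is the bookkeeping of which vertex occupies each role, together with the implicit use of distance-regularity to guarantee that the counts $b_{i-1}$, $c_i$ and $p_{i,j}^l$ are independent of the chosen base vertex, so that $k_i$ and $p_{i,j}^l$ are well-defined constants; in the final write-up one could equally well replace the argument by a direct citation of \cite[\S4.1]{BCN}.
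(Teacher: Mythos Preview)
Your arguments are correct: each of the three identities is exactly the standard double-count you describe, and the bookkeeping is in order. The paper itself gives no proof of this lemma at all---it simply cites \cite[\S4.1 (7); (9); (1c)]{BCN}---so there is nothing to compare against beyond noting that your reconstruction matches the usual textbook argument. (Indeed, the third identity and the symmetry $p_{i,j}^l=p_{j,i}^l$ you invoke are already asserted in the paragraph preceding the lemma, so you could equally well have treated that line as immediate.)
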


Lemma \ref{intersectionNumbers} gives the following corollary.

\begin{corollary} \label{pii-1}
Let $1\le i\le d$. Then
\begin{align*}
p_{i,i-1}^1 = \frac{k_i c_i}{k_1} = \frac{k_i (t_i +1)}{s(t+1)}.
\end{align*}
\end{corollary}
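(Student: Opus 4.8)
The plan is to derive the corollary directly from the three relations collected in Lemma \ref{intersectionNumbers}, so the argument is essentially a one-line index substitution followed by a rewrite in terms of the near-polygon parameters. First I would apply the symmetry relation $p_{i,j}^l k_l = p_{l,j}^i k_i$ with $j = i-1$ and $l = 1$, which yields
\[
p_{i,i-1}^1\, k_1 = p_{1,i-1}^i\, k_i .
\]
The only thing left to identify is $p_{1,i-1}^i$. Shifting the index in the relation $p_{1,i}^{i+1} = c_{i+1}$ (valid for $1 \le i \le d$) by replacing $i$ with $i-1$ gives $p_{1,i-1}^i = c_i$ for $2 \le i \le d$; the boundary case $i = 1$ is handled separately and is immediate, since $p_{1,0}^1$ counts only the vertex $y$ itself and $c_1 = t_1 + 1 = 1$. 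Hence $p_{i,i-1}^1\, k_1 = c_i k_i$, and dividing by $k_1$ gives the first equality.

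Finally, substituting $k_1 = s(t+1)$ and $c_i = t_i + 1$ (both recorded earlier, the former just after the definition of the valencies and the latter among the formulas $a_i = (s-1)(t_i+1)$, $b_i = s(t-t_i)$, $c_i = t_i+1$) converts $\frac{k_i c_i}{k_1}$ into the second form $\frac{k_i(t_i+1)}{s(t+1)}$, completing the proof.

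The main — and really only — obstacle is bookkeeping: ensuring the index shift stays within the ranges for which the cited relations are asserted, and checking the degenerate case $i = 1$ (where $i-1 = 0$) by hand. There is no genuine difficulty, as the corollary is purely a repackaging of standard intersection-number identities for distance-regular graphs together with the explicit formulas for $a_i, b_i, c_i$ in a regular near $2d$-gon.
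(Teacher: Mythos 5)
Your proof is correct and is exactly the derivation the paper intends: the corollary is stated as an immediate consequence of Lemma \ref{intersectionNumbers}, obtained by setting $j=i-1$, $l=1$ in the symmetry relation, identifying $p_{1,i-1}^i=c_i$ via the index-shifted third identity, and substituting $k_1=s(t+1)$ and $c_i=t_i+1$. Your explicit check of the boundary case $i=1$ is a harmless (and correct) extra precaution.
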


The following lemma follows directly from the definition of an $m$-ovoid and the fact that there 
are $s+1$ points on every line of a finite regular near $2d$-gon $\mathcal{S}$ with parameters $(s,t_2,\ldots, t_{d-1},t)$.

\begin{lemma}\label{complement}
The complement of an $m$-ovoid of $\mathcal{S}$ is a $(s+1-m)$-ovoid.
\end{lemma}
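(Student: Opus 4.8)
The plan is to give a direct complementation argument on each line, using only the definition of an $m$-ovoid and the fact that every line of a finite regular near $2d$-gon of order $(s,t_2,\ldots,t_{d-1},t)$ is incident with exactly $s+1$ points. Write $\mathcal{P}$ for the point set of $\mathcal{S}$, let $\mathcal{O}$ be an $m$-ovoid, and set $\mathcal{O}' := \mathcal{P}\setminus\mathcal{O}$.

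First I would fix an arbitrary line $\ell$ of $\mathcal{S}$. By definition of an $m$-ovoid, exactly $m$ of the $s+1$ points incident with $\ell$ lie in $\mathcal{O}$; hence exactly $(s+1)-m$ of them lie in $\mathcal{O}'$. Since this count is independent of the chosen line, $\mathcal{O}'$ meets every line in exactly $s+1-m$ points, which is precisely the defining property of an $(s+1-m)$-ovoid. One should also observe that $0\le s+1-m\le s+1$, since $0\le m\le s+1$ (a line carries only $s+1$ points), so the resulting value is a legitimate parameter; in particular the complement of a trivial $m$-ovoid ($m=0$ or $m=s+1$) is again trivial.

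There is no real obstacle here: the statement is immediate from the definitions once one records that $|\ell|=s+1$ for every line. The only thing worth flagging is the bookkeeping role this lemma plays later — it lets us assume without loss of generality that a putative nontrivial $m$-ovoid has $m\le (s+1)/2$, by passing to the complement if necessary, which is convenient when ruling out all cases except $m=(s+1)/2$ in the proof of Theorem \ref{ThmMain}.
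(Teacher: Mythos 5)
Your proof is correct and matches the paper's treatment: the paper simply observes that the lemma follows directly from the definition of an $m$-ovoid together with the fact that every line carries exactly $s+1$ points, which is precisely your line-by-line complementation argument. Nothing further is needed.
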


\begin{lemma}[{\cite[Lemma 5]{Vanhove2011}}] \label{Vanhove5}
If $\mathcal{O}$ is an $m$-ovoid of $\mathcal{S}$, then for every $i \in \{0, 1, \ldots, d \}$ and $x\in\mathcal{O}$, 
\[
|\Gamma_i (x) \cap \mathcal{O}| = k_i \bigg( \frac{m}{s+1} + \bigg(- \frac{1}{s}\bigg)^i \bigg(1 - \frac{m}{s+1}\bigg) \bigg).
\]
\end{lemma}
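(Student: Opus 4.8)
\emph{Proof proposal.} Fix a point $x\in\mathcal{O}$ and write $n_i:=|\Gamma_i(x)\cap\mathcal{O}|$ for $i\in\{0,1,\ldots,d\}$. The plan is to establish a first‑order recursion relating $n_i$ to $n_{i-1}$ and then to solve it by induction on $i$; the base case is immediate since $x\in\mathcal{O}$ gives $n_0=1$, which also matches the claimed formula because $k_0=1$.

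The recursion rests on one standard property of near polygons: if $\ell$ is a line whose (unique) point nearest to $x$ lies at distance $j-1$, then the other $s$ points of $\ell$ all lie at distance exactly $j$ from $x$ (immediate from the triangle inequality and axiom (3)). Call such a line a line of \emph{type} $j$. By regularity, a point at distance $i-1$ from $x$ lies on $t_{i-1}+1$ lines containing a point at distance $i-2$, hence on $t-t_{i-1}$ lines of type $i$; and a point at distance $i$ from $x$ lies on $t_i+1$ lines of type $i$. In particular the number of lines of type $i$ is $L_i=k_{i-1}(t-t_{i-1})$, since each such line has a unique nearest point to $x$, at distance $i-1$. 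Now fix $i\in\{1,\ldots,d\}$ and double‑count incident pairs $(\ell,P)$ with $\ell$ a line of type $i$ and $P\in\ell\cap\mathcal{O}$. Counting by lines and using the $m$‑ovoid property gives $mL_i=m\,k_{i-1}(t-t_{i-1})$; counting by points of $\mathcal{O}$, which on a type‑$i$ line lie at distance $i-1$ or $i$ from $x$, gives $n_{i-1}(t-t_{i-1})+n_i(t_i+1)$. Hence
\[
n_{i-1}(t-t_{i-1})+n_i(t_i+1)=m\,k_{i-1}(t-t_{i-1}),
\]
and, since $t_i+1=c_i\neq0$, this rearranges to $n_i=\dfrac{(t-t_{i-1})(m\,k_{i-1}-n_{i-1})}{t_i+1}$. (As a check, $i=1$ gives $n_1=(t+1)(m-1)$.)

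It then remains to verify by induction that this recursion with $n_0=1$ produces the closed form. The inductive step uses $k_i=sk_{i-1}(t-t_{i-1})/(t_i+1)$ from Lemma~\ref{intersectionNumbers} to replace the fraction $(t-t_{i-1})/(t_i+1)$ by $k_i/(sk_{i-1})$; substituting $n_{i-1}=k_{i-1}\bigl(\tfrac{m}{s+1}+(-\tfrac1s)^{i-1}(1-\tfrac{m}{s+1})\bigr)$ and simplifying, the coefficient of the second term picks up the factor $-\tfrac1s\cdot(-\tfrac1s)^{i-1}=(-\tfrac1s)^{i}$, yielding exactly $n_i=k_i\bigl(\tfrac{m}{s+1}+(-\tfrac1s)^{i}(1-\tfrac{m}{s+1})\bigr)$.

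I expect the only real obstacle to be the combinatorial bookkeeping in setting up the recursion—correctly classifying lines by the distance of their nearest point to $x$ and counting how many lines of each type pass through a point at distance $i-1$ versus distance $i$—after which the induction is routine algebra. One can also proceed more slickly: the characteristic vector $\chi$ of $\mathcal{O}$ satisfies $(A_1+(t+1)I)\chi=m(t+1)\allones$, so $\chi-\tfrac{m}{s+1}\allones$ is an eigenvector of $A_1$ for the least eigenvalue $-(t+1)$, hence an eigenvector of $A_i=v_i(A_1)$ for the eigenvalue $v_i(-(t+1))=(-\tfrac1s)^{i}k_i$, and reading off the $x$‑coordinate gives the result; but this merely relocates the work to evaluating the standard sequence at $-(t+1)$, which is proved by the same recursion.
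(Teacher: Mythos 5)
Your proof is correct. Note that the paper itself gives no proof of this lemma---it simply cites \cite[Lemma 5]{Vanhove2011}---so there is nothing internal to compare against. Your primary argument (classify each line by the distance from $x$ of its unique nearest point, double-count incidences between type-$i$ lines and points of $\mathcal{O}$, and solve the resulting first-order recursion using $k_i=sk_{i-1}(t-t_{i-1})/(t_i+1)$) is a clean, self-contained elementary derivation, and all the combinatorial counts check out: a point at distance $i-1$ lies on $t-t_{i-1}$ type-$i$ lines, a point at distance $i$ lies on $t_i+1$ of them, and the base case $n_0=1$, $k_0=1$ is right. The alternative you sketch at the end---that $\chi_{\mathcal{O}}-\tfrac{m}{s+1}\allones$ lies in the eigenspace of $A_1$ for the eigenvalue $-(t+1)$, so that $A_i$ acts on it by $k_i(-1/s)^i$---is essentially Vanhove's original argument, and as you say it amounts to the same recursion packaged algebraically. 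Either route is acceptable.
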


By Lemmas \ref{complement} and \ref{Vanhove5}, we have the following:

\begin{corollary} \label{corollaryOfVanhove}
If $\mathcal{O}$ is an $m$-ovoid of $\mathcal{S}$, then for every $i \in \{0, 1, \ldots, d \}$ and $x\notin\mathcal{O}$, 
\[
|\Gamma_i (x) \cap \mathcal{O}| =  k_i \frac{m}{s+1} \left(1 - \left(\frac{-1}{s}\right)^i\right).
\]
\end{corollary}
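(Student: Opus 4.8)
The plan is to reduce directly to Lemma \ref{Vanhove5} by passing to the complementary ovoid. First I would record the elementary fact that, because the collinearity graph of $\mathcal{S}$ is distance regular, every vertex $x$ has exactly $k_i$ vertices at distance $i$, so that $\Gamma_i(x)$ is partitioned as $(\Gamma_i(x)\cap\mathcal{O})\sqcup(\Gamma_i(x)\cap\mathcal{O}^c)$ and hence
\[
|\Gamma_i(x)\cap\mathcal{O}| = k_i - |\Gamma_i(x)\cap\mathcal{O}^c|.
\]

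Next, by Lemma \ref{complement} the complement $\mathcal{O}^c$ is an $(s+1-m)$-ovoid of $\mathcal{S}$, and the hypothesis $x\notin\mathcal{O}$ says precisely that $x\in\mathcal{O}^c$. I can therefore apply Lemma \ref{Vanhove5} with $\mathcal{O}$ replaced by $\mathcal{O}^c$ and $m$ replaced by $s+1-m$, which gives
\[
|\Gamma_i(x)\cap\mathcal{O}^c| = k_i\left(\frac{s+1-m}{s+1} + \left(-\frac{1}{s}\right)^i\left(1-\frac{s+1-m}{s+1}\right)\right) = k_i\left(\frac{s+1-m}{s+1} + \left(-\frac{1}{s}\right)^i\frac{m}{s+1}\right).
\]

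Finally I would substitute this into $|\Gamma_i(x)\cap\mathcal{O}| = k_i - |\Gamma_i(x)\cap\mathcal{O}^c|$: the term $1 - \frac{s+1-m}{s+1}$ collapses to $\frac{m}{s+1}$, the sign on $(-1/s)^i$ flips, and the stated formula $k_i\,\frac{m}{s+1}\bigl(1-(-1/s)^i\bigr)$ drops out after a one-line simplification. There is no genuine obstacle here; the only points requiring a moment's care are that $x\notin\mathcal{O}$ is exactly the condition needed to invoke Lemma \ref{Vanhove5} at $x$ for $\mathcal{O}^c$, and that the identity $|\Gamma_i(x)| = k_i$ is valid for every $x$ thanks to distance regularity of the collinearity graph.
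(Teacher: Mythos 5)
Your proof is correct and is precisely the argument the paper intends: the corollary is stated as following ``by Lemmas \ref{complement} and \ref{Vanhove5}'', i.e., apply Lemma \ref{Vanhove5} to the $(s+1-m)$-ovoid $\mathcal{O}^c$ at $x\in\mathcal{O}^c$ and subtract from $k_i$. The algebraic simplification you describe checks out exactly.
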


\section{Proof of the main result}

We now prove Theorem \ref{ThmMain}. Recall that we are assuming that
\[
c_i=t_i+1= \frac{(s^i+(-1)^i)(c_{i-1} +(-1)^is^{i-2})}{s^{i-2}+(-1)^i}
\]
for some $3\le i\le d$.

\begin{proof}
Let $\mathcal{O}$ be a nontrivial $m$-ovoid of $\mathcal{S}$. 
Throughout this proof, we will let $\chi_\mathcal{O}$ denote the \emph{characteristic vector} of $\mathcal{O}$ with respect to the set
of points $\mathcal{P}$:
\[ (\chi_\mathcal{O})_y = 
	\begin{cases} 
      1 & \text{if }y \in \mathcal{O} \\
      0 & otherwise.
      \end{cases} \]

A simple double counting argument shows that $|\mathcal{O}|$ is equal to 
$m|\mathcal{L}|/(t+1)$
where $\mathcal{L}$ is the set of lines of $\mathcal{S}$. If we also count flags (i.e., point-line incident pairs), then
$|\mathcal{P}|(t+1)=|\mathcal{L}|(s+1)$ where $\mathcal{P}$ is the set of points of $\mathcal{S}$,
and hence
\[
|\mathcal{O}|=\frac{mn}{s+1},
\]
where $n=|\mathcal{P}|$.

Recall that $3\le i\le d$.
Now we fix an element $x\notin \mathcal{O}$ and count pairs $(y,z)$ of elements of $\mathcal{O}$ such that $\dist(x,y) =i$ and either
$\dist(y,z)=i-1$ and $\dist(x,z)=1$, or $\dist(y,z)=1$ and $\dist(x,z)=i-1$.

Let $x$ and $y$ be two points at distance $i$. Define $v_{x,y}$ as in \cite[Theorem 3.2(b)]{DeBruynVanhoveInequalities},
\[
v_{x,y}:=s(c_{i-1}+(-1)^is^{i-2})(\chi_{x}+\chi_{y})+\chi_{\Gamma_1(x)\cap \Gamma_{i-1}(y)}+
\chi_{\Gamma_{i-1}(x)\cap \Gamma_1(y)}.
\]
Note that
\[
v_{x,y}\cdot \allones = 2( s(c_{i-1}+(-1)^is^{i-2})+p_{1,i-1}^i ) =2( s(c_{i-1}+(-1)^is^{i-2})+c_i )
\]
and furthermore that $v_{x,y}$ and $\chi_{\mathcal{O}}$ are design-orthogonal \cite[Theorem 3.2]{DeBruynVanhoveInequalities} and hence by Lemma \ref{design-orthog}, 
\[
\mu:=v_{x,y}\cdot\chi_\mathcal{O} =2( s(c_{i-1}+(-1)^is^{i-2})+c_i )m/(s+1).
\]
Let $\Gamma$ be the collinearity graph of $\mathcal{S}$.

Counting first $y$ and then $z$, the number of pairs is
\begin{align*}
& \sum_{y\in \mathcal{O}\cap \Gamma_i(x)} \left( | \Gamma_1(x) \cap \Gamma_{i-1}(y) \cap \mathcal{O}| + | \Gamma_{i-1}(x) \cap \Gamma_1(y) \cap \mathcal{O} | \right)\\
=&\sum_{y\in \mathcal{O}\cap \Gamma_i(x)} (v_{x,y}-s(c_{i-1}+(-1)^is^{i-2})(\chi_{x}+\chi_{y}))\cdot\chi_\mathcal{O}\\
=&|\mathcal{O}\cap \Gamma_i(x)| (\mu-s(c_{i-1}+(-1)^is^{i-2}))\\
=&|\mathcal{O}\cap \Gamma_i(x)| \left(\frac{2\left( s(c_{i-1}+(-1)^is^{i-2})+c_i \right)m}{s+1} -s(c_{i-1}+(-1)^is^{i-2})\right)\\
=&|\mathcal{O}\cap\Gamma_i(x)| \left(\frac{2 c_i m s-(s+1-2 m) \left(c_{i-1} s^2+(-1)^i s^i\right)}{s (s+1)}\right).
\end{align*}
Now, by Corollary \ref{corollaryOfVanhove} and Lemma \ref{intersectionNumbers}, \begin{align*}
|\mathcal{O}\cap \Gamma_i(x)| c_i &= k_i c_i \frac{m}{s+1}\left(1 - \left(- \frac{1}{s}\right)^i\right) =sk_{i-1}(t-t_{i-1})\frac{m}{s+1}\left(1 - \left(- \frac{1}{s}\right)^i\right) 
\end{align*}
and hence the number of pairs $(y,z)$ is
\small \begin{equation}\label{firstcount}
 \frac{mk_{i-1} (t - t_{i-1})}{s+1}\left(1 - \left(- \frac{1}{s}\right)^i\right)
\frac{  2 c_i m s-(s+1-2 m) \left(c_{i-1} s^2+(-1)^i s^i\right)}{c_i (s+1)}.
\end{equation}\normalsize

Now we consider the pairs the opposite way, namely counting $z$ then $y$. The number of pairs $(z,y)$ is equal to
\[
\sum_{z\in \mathcal{O}\cap \Gamma_1(x)} | \Gamma_{i-1}(z) \cap \Gamma_i(x) \cap\mathcal{O}|
+ \sum_{z\in \mathcal{O}\cap \Gamma_{i-1}(x)} | \Gamma_1(z) \cap \Gamma_i(x) \cap\mathcal{O}|.
\]
Suppose $\dist(z,x)=i-1$. There are $t+1$ lines on $z$, and the set of points incident with these lines, other than the point $z$ itself, form $\Gamma_1(z)$. There are $t_{i-1}+1$ lines on $z$ incident with a unique point at distance $i-2$ from $x$, and the remaining points on these lines are at distance $i-1$ from $x$. Moreover, $z$ is the unique nearest point to $x$ with distance $i-1$ for the remaining $t - t_{i-1}$ lines, and hence any other point on these lines must have distance $i$ from $x$. Since $z$ is in $\mathcal{O}$, there are $m-1$ additional points of $\mathcal{O}$ on each such line.
Therefore,  
\[
| \Gamma_1(z) \cap \Gamma_i(x) \cap\mathcal{O}|=    (t-t_{i-1})(m-1).
\]

Now suppose $\dist(z,x)=1$. We will compute $ |\Gamma_{i-1}(z) \cap \Gamma_i(x) \cap\mathcal{O}|$.
Take $z \in \mathcal{O} \cap \Gamma_1 (x)$ and consider a point $w \in \Gamma_{i-2}(z) \cap \Gamma_{i-1}(x)$. Note that any point $y$ is collinear with some such point $w$, giving rise to the following equation:
{\small
\begin{align}\label{doublecount}
 \sum_{y \in \Gamma_{i-1}(z) \cap \Gamma_i(x) \cap \mathcal{O}}|\Gamma_{i-2}(z) \cap \Gamma_{i-1}(x) \cap \Gamma_1 (y)|
=& \sum_{w \in \Gamma_{i-2}(z) \cap \Gamma_{i-1}(x) \cap \mathcal{O}}\hspace{-1cm} |\Gamma_{i-1}(z) \cap \Gamma_{i}(x) \cap \mathcal{O} \cap \Gamma_1 (w)|\\ 
+& \sum_{w \in \Gamma_{i-2}(z) \cap \Gamma_{i-1}(x) \cap \mathcal{O}^c}\hspace{-1cm}  |\Gamma_{i-1}(z) \cap \Gamma_{i}(x) \cap \mathcal{O} \cap \Gamma_1 (w)|\notag
\end{align}}
where $\mathcal{O}^c$ is the complement of $\mathcal{O}$ within the set of points of $\mathcal{S}$.

 Let $\ell$ be a line through $w$. There is a point on $\ell$ which is the unique closest point to $x$. If this point is $w$, then every other point must be at distance $i$ from $x$. If this point is not $w$, then it must be distance $i - 2$ from $x$, and every other point on $\ell$ is distance $i -1$ from $x$. There are $t_{i-1} +1$ lines on $w$ with a unique point at distance $i - 2$ from $x$. Hence there are $t - t_{i-1}$ lines $\ell'$ for which $w$ is the unique nearest point to $x$ and every other point on $\ell'$ is at distance $i$ from $x$. Moreover, note that if a point $y$ is at distance $i$ from $x$, then it cannot be distance $i-2$ from $z$, since $\dist(x,z) = 1$, and thus any point other than $w$ on any line $\ell'$ is in $\Gamma_{i-1}(z) \cap \Gamma_{i}(x) \cap \Gamma_1 (w)$. There are $m-1$ such points in $\mathcal{O}$ when $w \in \mathcal{O}$, otherwise there are $m$ such points in $\mathcal{O}$.

There are $t_{i-1}$ lines on any point $y$ which have a unique point at distance $i-2$ from $z$, and hence also at distance $i-1$ from $x$. Recalling that $|\Gamma_{i-2}(z) \cap \Gamma_{i-1}(x)| = p_{i-1,i-2}^1$, our Equation \eqref{doublecount} becomes:
\begin{align*}
|\Gamma_{i-1}(z) \cap \Gamma_i(x) \cap \mathcal{O}|(t_{i-1} +1) & = |\Gamma_{i-2}(z) \cap \Gamma_{i-1}(x) \cap \mathcal{O}| (t - t_{i-1})(m-1) \\
& \quad \quad \quad + |\Gamma_{i-2}(z) \cap \Gamma_{i-1}(x) \cap \mathcal{O}^c| (t - t_{i-1})m \\
& = |\Gamma_{i-2}(z) \cap \Gamma_{i-1}(x) \cap \mathcal{O}| (t - t_{i-1})(m-1)\\
& \quad \quad \quad + (p_{i-1,i-2}^1 - |\Gamma_{i-2}(z) \cap \Gamma_{i-1}(x) \cap \mathcal{O}|)(t - t_{i-1})m \\
& = p_{i-1,i-2}^1 (t - t_{i-1})m - |\Gamma_{i-2}(z) \cap \Gamma_{i-1}(x) \cap \mathcal{O}| (t - t_{i-1}).
\end{align*}
Hence we obtain an iterative formula,
\[
|\Gamma_{i-1}(z) \cap \Gamma_{i}(x) \cap \mathcal{O}| = p_{i-1,i-2}^1 \frac{t-t_{i-1}}{t_{i-1}+1}m - \frac{t - t_{i-1}}{t_{i-1}+1} |\Gamma_{i-2}(z) \cap \Gamma_{i-1}(x) \cap \mathcal{O}|,
\]
which, with the help of Lemma \ref{intersectionNumbers} and Corollary \ref{pii-1}, we can write as a recurrence relation
\[
s f_i = m-f_{i-1},\quad f_1=1
\]
where $f_i:=\frac{1}{p_{i,i-1}^1} |\Gamma_{i-1}(z) \cap \Gamma_{i}(x) \cap \mathcal{O}|$ for all $i\ge 1$.
(Note: $|\Gamma_{0}(z) \cap \Gamma_{1}(x) \cap \mathcal{O}| = 1$ and $p_{1,0}^1=1$).
Therefore, by the elementary theory of recurrence relations, we have
\[
f_i=\frac{m-s \left(-\frac{1}{s}\right)^i (-m+s+1)}{s+1}
\]
for all $i\ge 1$. Hence, by Corollary \ref{pii-1},
\begin{align*}
|\Gamma_{i-1}(z) \cap \Gamma_{i}(x) \cap \mathcal{O}|&=p_{i,i-1}^1\left(\frac{m-s \left(-\frac{1}{s}\right)^i (-m+s+1)}{s+1}
\right)\\
&= \frac{k_{i-1} (t - t_{i-1})}{t+1}\left(\frac{m-s \left(-\frac{1}{s}\right)^i (-m+s+1)}{s+1}
\right)\\
&=  \frac{k_{i-1} (t - t_{i-1})}{s^{i-1}(t+1)} \left(\frac{m}{s+1} \left(s^{i-1} + (-1)^{i-2}\right) + (-1)^{i-1} \right).
\end{align*}

Now, making use of Corollary \ref{corollaryOfVanhove}, we sum our two terms together:
\begin{align*} &\sum_{z \in \mathcal{O} \cap \Gamma_1(x)} |\Gamma_{i-1}(z) \cap \Gamma_i (x) \cap \mathcal{O}|
+ \sum_{z \in \mathcal{O} \cap \Gamma_{i-1}(x)} |\Gamma_{1}(z) \cap \Gamma_i (x) \cap \mathcal{O}| \\
=&|\mathcal{O} \cap \Gamma_1(x)||\Gamma_{i-1}(z) \cap \Gamma_i (x) \cap \mathcal{O}| + |\mathcal{O} \cap \Gamma_{i-1}(x)||\Gamma_{1}(z) \cap \Gamma_i (x) \cap \mathcal{O}|\\
=&s(t+1) \frac{m}{s+1}\left(1 + \frac{1}{s}\right)\frac{k_{i-1} (t - t_{i-1})}{s^{i-1}(t+1)} \left(\frac{m}{s+1} \left(s^{i-1} + (-1)^{i-2}\right) + (-1)^{i-1} \right) \\
&\quad \quad \quad \quad + k_{i-1} \frac{m}{s+1}\bigg(1 - \left(- \frac{1}{s}\right)^{i-1}\bigg)(t-t_{i-1})(m-1) \\
=&\frac{mk_{i-1} (t - t_{i-1})}{s+1}\left(   m\left(1-\left(-\frac{1}{s}\right)^{i-1} \right) +
\left(\frac{-1}{s}\right)^{i-1}(s+1)+ (m-1)\left(1 - \left(- \frac{1}{s}\right)^{i-1}\right) \right)
\end{align*}
and therefore, the number of pairs $(z,y)$ is
\begin{equation}\label{secondcount}
\frac{mk_{i-1} (t - t_{i-1})}{s+1}\left(   2m-1 +\left(\frac{-1}{s}\right)^{i-1}(s-2m+2) \right).
\end{equation}

Equating the two counts, \eqref{firstcount} and \eqref{secondcount} yields
\begin{align*}
& \left(1 - \left(- \frac{1}{s}\right)^i\right)\frac{  2 c_i m s-(s+1-2 m) \left(c_{i-1} s^2+(-1)^i s^i\right)}{c_i (s+1)}\\
&= 2m-1 +\left(\frac{-1}{s}\right)^{i-1}(s-2m+2).
\end{align*}
Taking the difference of each side of the above equation and factoring gives
\[
\frac{s^{-i} (s+1-2 m) \left(c_i \left(s^i+(-1)^i (s+2) s\right)+\left((-1)^i-s^i\right) \left(c_{i-1} s^2+(-1)^i s^i\right)\right)}{c_i (s+1)}=0
\]
and hence 
\begin{equation}\label{equalto0}
(s+1-2 m) \left(c_i \left(s^i+(-1)^i (s+2) s\right)+\left((-1)^i-s^i\right) \left(c_{i-1} s^2+(-1)^i s^i\right)\right)=0
\end{equation}
Now by assumption,
\[
c_{i-1}s^2 +(-1)^is^i
=c_i\frac{s^i+(-1)^is^2}{s^i+(-1)^i}
\]
and hence
\begin{align*}
&c_i \left(s^i+(-1)^i (s+2) s\right)+\left((-1)^i-s^i\right) \left(c_{i-1} s^2+(-1)^i s^i\right)\\
&=c_i \left(s^i+(-1)^i (s+2) s +\left((-1)^i-s^i\right) \frac{s^i+(-1)^is^2}{s^i+(-1)^i} \right)\\
&=c_i \frac{2 (-1)^i (s+1) \left(s^i+(-1)^i s\right)}{s^i+(-1)^i}.
\end{align*}
Since $i>1$, we have $s^i+(-1)^i s\ne 0$, and therefore,
Equation \eqref{equalto0} becomes $m=(s+1)/2$.
\end{proof}

\section{Further results and computation}

Theorem \ref{DQDWDH} leaves open the natural question of whether there exist hemisystems of $\DQ(6, q)$, $\DW(5,q)$ and $\DH(5,q^2)$.
Firstly, De Bruyn and Vanhove announced in conference presentations
that there are no hemisystems of $\DW(5,3)$, and that there is a unique example
for $\DQ(6,3)$. We thank the referee and Michel Lavrauw for mentioning these results to us.
For small values of (odd) $q$, we have found examples for $\DQ(6,q)$, and we have listed the known examples in Table \ref{table:DQ}. In particular, we
could show by using the computer algebra system \textsc{GAP} \cite{GAP4}, a package \textsc{FinInG} \cite{fining}, and the mixed-integer programming software \textsc{Gurobi} \cite{gurobi} that there
is a unique example up to equivalence in $\DQ(6,3)$. For $\DQ(6,5)$, there were numerous examples found 
admitting an element of order $5$ or $9$, but we were unable to enumerate them all.

\begin{table}[ht]
\begin{tabular}{ccc}
\toprule
$q$ & Stabiliser  & Number up to equivalence\\
\midrule
$3$ & $2\times A_5$ & 1 \\
$5$ & $D_{60}$ & $4$\\ 
& $D_{20}$ & $16$\\
\bottomrule
\end{tabular}
\caption{Some known examples of hemisystems of $\DQ(6,q)$, for small $q$.}\label{table:DQ}
\end{table}
For $\DW(5,q)$, it seems the situation is different, despite its combinatorial parameters being identical to those of $\DQ(6,q)$.
By computer, we showed that there are no hemisystems of $\DW(5,q)$ for $q\in\{3,5\}$. We make the following conjectures:

\begin{conjecture}
There are no hemisystems of $\DW(5,q)$, for all prime powers $q$.
\end{conjecture}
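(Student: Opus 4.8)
The plan is to argue by contradiction after disposing of the trivial case. Since $(q+1)/2$ is not an integer when $q$ is even, we may assume $q$ is odd, and we suppose that $\mathcal{O}$ is a hemisystem of $\DW(5,q)$ with characteristic vector $\chi_\mathcal{O}$. As $\DW(5,q)$ attains the lower bound of \eqref{DBVbounds} at $i=3$, the argument of Theorem \ref{ThmMain} already shows that $\chi_\mathcal{O}$ is design-orthogonal to every vector $v_{x,y}$ with $\dist(x,y)=3$. I would first leverage this, together with the analogous design-orthogonality relations attached to the quads, to pin down the dual degree set of $\chi_\mathcal{O}-\tfrac12\allones$; the expectation (mirroring Vanhove's Theorem 4 for $\DH(2d-1,q^2)$) is that it collapses to a single index, so that $\mathcal{O}$ is a regular (intriguing) set whose induced subgraph in the collinearity graph is distance-regular with intersection array depending only on $q$ — and the same holds for $\mathcal{O}^c$, which is again a hemisystem by Lemma \ref{complement}.

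The second step is geometric. A quad of $\DW(5,q)$, that is, a convex subspace of diameter $2$, is isomorphic to the generalised quadrangle $Q(4,q)$ of order $(q,q)$; crucially, the corresponding quads of $\DQ(6,q)$ are isomorphic instead to $W(3,q)$, which is not isomorphic to $Q(4,q)$ for $q$ odd. An $m$-ovoid of $\DW(5,q)$ meets each quad in an $m$-ovoid of that quad, so $\mathcal{O}$ induces a hemisystem of $Q(4,q)$ on every quad, while meeting every line in exactly $(q+1)/2$ points. I would then analyse the local picture at a fixed point $P$: the quads through $P$ are the lines of a projective plane $\mathrm{PG}(2,q)$ whose points are the $q^2+q+1$ lines of $\DW(5,q)$ through $P$, and the hemisystems that $\mathcal{O}$ induces on these quads must be mutually compatible along the lines through $P$. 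The goal is to combine the rigidity obtained in the first step with what can be said about hemisystems of $Q(4,q)$ for $q$ odd, and with the non-existence of $1$-ovoids of $\DW(5,q)$ (Thomas; De Bruyn and Vanhove), to force a numerical or incidence contradiction. A Segre-style alternative would instead encode $\mathcal{O}$ (or $\mathcal{O}^c$) by polynomial conditions on the symplectic space $W(5,q)$ and derive a contradiction from the degree of an associated algebraic curve, as in Segre's treatment of $\DH(3,q^2)$ and its extension by Cameron, Goethals and Seidel.

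The principal obstacle is essentially unavoidable: $\DW(5,q)$ and $\DQ(6,q)$ have identical intersection arrays, yet $\DQ(6,q)$ does carry hemisystems — the unique one of $\DQ(6,3)$, and several of $\DQ(6,5)$; see Table \ref{table:DQ}. Hence no argument that uses only the distance-regular, or equivalently the association-scheme, structure of $\DW(5,q)$ can work, and any proof must exploit a genuine difference between the symplectic dual polar space and the parabolic orthogonal one over a field of odd order: the candidates are the fact that the quads are copies of $Q(4,q)$ rather than of $W(3,q)$, the structure of the natural $\mathrm{Sp}(6,q)$-module on the point space, and the non-existence of ovoids of $W(5,q)$. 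A more mundane difficulty is that even the base cases $q\in\{3,5\}$ are at present known only by computer, so what is really needed is a uniform argument rather than a reduction to finitely many values of $q$; and, since a hemisystem of $\DH(5,q^2)$ would induce one of $\DW(5,q)$, a proof of the conjecture would at the same time rule out hemisystems of $\DH(5,q^2)$.
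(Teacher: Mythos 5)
This statement is a \emph{conjecture} in the paper, not a theorem: the authors offer no proof, only computational evidence (non-existence of hemisystems of $\DW(5,q)$ for $q\in\{3,5\}$, verified with \textsc{GAP}, \textsc{FinInG} and \textsc{Gurobi}). Your submission is likewise not a proof but a research programme: at no point is a contradiction actually derived. The first step (design-orthogonality of $\chi_\mathcal{O}$ to the vectors $v_{x,y}$, and the resulting control on the dual degree set of $\chi_\mathcal{O}-\tfrac12\allones$) holds equally for $\DQ(6,q)$, which \emph{does} admit hemisystems for $q=3,5$ (Table \ref{table:DQ}), so it cannot by itself rule anything out; and the second step ends with ``the goal is to \dots force a numerical or incidence contradiction'' without exhibiting one. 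You candidly identify the central obstruction yourself --- $\DW(5,q)$ and $\DQ(6,q)$ share an intersection array, so any proof must use genuinely geometric information --- but identifying the obstruction is not the same as overcoming it.

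That said, your surrounding observations are sound and consistent with the paper: the case $q$ even is vacuous since $m=(q+1)/2$ would not be an integer; quads of $\DW(5,q)$ are isomorphic to $Q(4,q)$ while those of $\DQ(6,q)$ are isomorphic to $W(3,q)$, which is the natural place to look for a distinguishing feature; a hemisystem of $\DH(5,q^2)$ restricts to one of $\DW(5,q)$, so the conjecture would settle Vanhove's Problem 7; and the known evidence is purely computational, so a uniform argument is what is required. If you wish to pursue this, the honest framing is as a strategy for attacking an open problem, not as a proof of the statement.
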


If true, this would also imply that there are no hemisystems of $\DH(5,q^2)$, for all prime powers $q$, answering a problem posed by Vanhove
\cite[Appendix B, Problem 7]{VanhoveThesis}.

\begin{conjecture}
For each odd prime power $q$, there exists a hemisystem of $\DQ(6,q)$.
\end{conjecture}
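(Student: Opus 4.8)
The plan is to construct, for each odd prime power $q$, an explicit hemisystem of $\DQ(6,q)$ by prescribing a group of automorphisms and reducing the problem to a balancing condition among orbits. Recall that the points of $\DQ(6,q)$ are the planes (generators) of a parabolic quadric $Q(6,q)$ in $\mathrm{PG}(6,q)$, the lines are the singular lines of $Q(6,q)$, and a hemisystem is a set $H$ of generators meeting every singular line in exactly $(q+1)/2$ of the $q+1$ generators through it. By Lemma~\ref{Vanhove5} and Corollary~\ref{corollaryOfVanhove}, the inner distribution of an $m$-ovoid forces $\chi_H$ to lie in $E_0$ plus the eigenspace $E$ of the collinearity graph $\Gamma$ affording the smallest eigenvalue (the one producing the alternating $(-1/s)^i$ pattern of Lemma~\ref{Vanhove5}); since a hemisystem has exactly $n/2$ points, the vector $v:=2\chi_H-\allones$ satisfies $v\cdot\allones=0$ and hence lies in $E$ itself. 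Thus the task is to exhibit a $\{\pm1\}$-valued eigenvector in $E$, equivalently a ``perfect two-colouring'' of the generators realising that eigenspace.

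The first step is to fix a subgroup $G\le \mathrm{P}\Gamma\mathrm{O}(7,q)$ and search only for $G$-invariant hemisystems, so that $\chi_H$ is constant on the $G$-orbits $O_1,\dots,O_r$ of generators. A $G$-invariant candidate is then a union $\bigcup_{a\in D}O_a$ for some $D\subseteq\{1,\dots,r\}$, and the hemisystem condition becomes: for each $G$-orbit of singular lines, a representative line lies in exactly $(q+1)/2$ generators of $\bigcup_{a\in D}O_a$. This is a $0/1$ linear system indexed by the line-orbits, governed by the orbit--incidence matrix $N$ with $N_{\ell,a}=|\{\pi\in O_a:\ell\subset\pi\}|$. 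The examples of Table~\ref{table:DQ}, with $G$ a copy of $2\times A_5$, $D_{60}$ or $D_{20}$, are precisely instances of this reduction solved by computer for small $q$; the goal is to replace such ad hoc groups by a single uniform family $G=G_q$ for which the system is provably solvable for all odd $q$.

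For a uniform construction the natural choice is to take $G_q$ cyclic, generated by a semisimple element lying in a maximal torus of $\mathrm{SO}(7,q)$ of order dividing $q^3+1$ (or $(q^2+1)(q+1)$), so that a Singer-type action makes the generic generator- and line-orbits regular and indexes them by cyclotomic cosets of $\mathbb{F}_{q^3}^{\ast}$ or $\mathbb{F}_{q^2}^{\ast}$. With such a labelling the incidences $N_{\ell,a}$ become evaluations of Gauss and Jacobi sums over the relevant field, and taking $D$ to be a union of cyclotomic classes turns the balancing requirement into a system of identities among cyclotomic numbers. Concretely, the programme would be: (i) coordinatise $Q(6,q)$ and parametrise generators through an element of $\mathbb{F}_{q^3}$ (with a fixed point absorbing the odd-dimensional part) so that $G_q$ acts by multiplication; (ii) evaluate the orbit incidences as explicit character sums; and (iii) choose the defining cyclotomic classes $D$ and verify that the relevant sum equals exactly $(q+1)/2$.

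The hard part is step (iii). The Gauss-sum evaluations that would make the count come out to precisely $(q+1)/2$ are exact only in the semi-primitive or low-index regimes, whereas for general $q$ one has only Weil-type estimates, which deliver \emph{approximately} but not \emph{exactly} balanced orbits; the rigid $\pm1$ constraint cannot be forced by such bounds alone. Equally delicate are the non-generic orbits---the short generator-orbits fixed or nearly fixed by $G_q$, and the singular lines not in general position with respect to the torus---whose contributions must also balance to $(q+1)/2$ and which must be treated separately. Unlike the rank-$2$ situation of $\DH(3,q^2)$, where group-theoretic and cyclotomic methods yield hemisystems uniformly in $q$, no such evaluation is presently known for the rank-$3$ geometry $\DQ(6,q)$. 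It is exactly the passage from numerically balanced orbits to an \emph{exact} union-of-classes solution valid for every odd $q$ that constitutes the main obstacle, and this is why the statement is offered as a conjecture rather than a theorem.
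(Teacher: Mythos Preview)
The statement is a \emph{conjecture}, and the paper offers no proof of it; the only support given is the computational evidence recorded in Table~\ref{table:DQ} for $q\in\{3,5\}$. There is therefore no ``paper's own proof'' against which to compare your attempt.

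Your proposal is likewise not a proof, and to your credit you say so explicitly in the final paragraph: the key step~(iii)---forcing the cyclotomic/Gauss-sum counts to equal \emph{exactly} $(q+1)/2$ for every odd $q$, and handling the short orbits---is left entirely unresolved, and Weil-type estimates cannot close it. What you have written is a reasonable heuristic discussion of one possible line of attack (prescribe a torus, reduce to an orbit-incidence $0/1$ system, hope for a cyclotomic-class solution) together with an honest account of why it stalls. That is appropriate commentary on an open problem, but it should not be presented as a proof proposal: nothing in it establishes the existence of a hemisystem of $\DQ(6,q)$ for a single new value of $q$, let alone for all odd prime powers.
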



\end{document}